\title{A note on clustered cells}
\author{Saskia Chambille, Pablo Cubides Kovacsics and Eva Leenknegt}
\newtheorem*{theorem*}{Theorem}
\declaretheorem[style=definition,qed=$\blacksquare$,numberwithin=section]{definition}
\declaretheorem[style=definition,qed=$\blacksquare$, sibling = definition]{lemma-definition}
\declaretheorem[style=theorem, sibling = definition]{theorem}
\declaretheorem[style=theorem, sibling = definition]{lemma}
\declaretheorem[style=theorem, sibling = definition]{question}
\newcommand{\Q}{\mathbb{Q}}
\newcommand{\QQ}{\Q}
\newcommand{\Z}{\mathbb{Z}}
\newcommand{\N}{\mathbb{N}}
\newcommand{\ord}{\mathrm{ord}}
\newcommand{\ac}{\text{ac}\,}
\newcommand{\acm}{\mathrm{ac}_{m}\,}
\begin{document}


\maketitle

\begin{abstract}
This note contains additions to the paper \emph{Clustered cell decomposition in $P$-minimal structures}. We discuss a question which was raised in that paper, on the order of clustered cells. We also consider a notion of cells of minimal order, which is a slight optimalisation of the theorem from the original paper. \end{abstract}

 \noindent \textit{This note is a companion to \cite{cham-cub-leen}, and we refer the reader to that paper for definitions, notation and terminology.} 
\\\\
In the paper \emph{Clustered cell decomposition in $P$-minimal structures} \cite{cham-cub-leen}, we proved a cell decomposition theorem for general $P$-minimal structures (without the requirement of definable Skolem functions). The current note is motivated by the question whether there exists an \emph{optimal} version of this theorem. In asking this, we are fully aware that \emph{optimality} is not a very well defined notion, and hence part of this note will be devoted to proposing a possible interpretation of this concept, by introducing a notion of cells of minimal order in Section \ref{sec:min}.
\\\\
The following question was raised in \cite{cham-cub-leen}:

\begin{question}\label{q1} 
Can every regular clustered cell of finite order be partitioned into finitely many regular clustered cells of order 1?
\end{question}

Note that, if this question were to have an affirmative answer, this would imply a significant simplification of the Clustered Cell Decomposition Theorem from \cite{cham-cub-leen}. 
Moreover, this would mean that, at least in spirit, such a  generalized cell decomposition theorem stays very close to the spirit of \emph{classical} (Denef-type) cell decomposition: recall that the absence of Skolem functions has forced us to introduce a notion of cells where the centers are no longer given by definable functions.  However, for a clustered cell of order 1, the set $\Sigma$ can still be seen as the graph of a definable function $c:S \to \mathbb{B}$, where $\mathbb{B}$ denotes the set of balls in $K$. 
Hence, a restriction to clustered cells of order 1 would then indeed be optimal in the sense that we stay as close as possible to the idea of centers as definable functions.

 Unfortunately, it may not be possible to achieve this. In Section \ref{sec:questions}, we will further explore the above question, along with some reformulations where we make a connection with cells of minimal order and a weak version of Skolem functions. 
In Section \ref{sec:biggerorder}, we will explore further properties of the tree structure of the sets $\Sigma$ associated to cells of minimal order. 
\\\\
To conclude this introduction, we will restate the Cell Decomposition Theorem from 
\cite{cham-cub-leen}, and give a short recap of some of the main notions from \cite{cham-cub-leen}.

\begin{theorem}[Clustered Cell Decomposition]\label{thm:celldecomposition}
Let $X \subseteq S \times K$ be a set definable in a $P$-minimal structure. Then there exist $n,m \in \N\backslash\{0\}$ and a finite partition of $X$ into definable sets $X_i \subseteq S_i \times K$ of one of the following forms
\begin{itemize}
\item[(i)] Classical cells 
\[X_i= \{ (s,t) \in S_i \times K \mid \alpha_{i}(s) \ \square_1 \ \ord(t-c_{i}(x)) \ \square_2 \ \beta_{i}(s) \wedge t - c_{i}(s) \in \lambda_{i} Q_{n,m} \},\]
where $\alpha_{i}, \beta_{i}$ are definable functions $S_i \to \Gamma_K$, the squares $\square_1,\square_2$ may denote either $<$ or    $\emptyset$ (i.e. `no condition'), and $\lambda_{i} \in K$. The center $c_{i}: S_i \to K$ is a definable function (which may not be unique). 
\item[(ii)] Regular clustered cells $X_i=C_{i}^{\Sigma_{i}}$ of order $k_{i}$. \item[] Let $\sigma_1, \ldots, \sigma_{k_{i}}$ be (non-definable) sections of the definable multi-ball $\Sigma_{i} \subseteq S_i \times K$, such that for each $s \in S_i$, the set $\{\sigma_1(s), \ldots, \sigma_{k_{i}}(s)\}$ contains representatives of all $k_{i}$ disjoint balls covering $(\Sigma_{i})_s$. Then $X_{i}$ partitions as
\[X_{i} = C_{i}^{\sigma_1} \cup \ldots \cup C_{i}^{\sigma_{k_{i}}},\]
where each set $C_{i}^{\sigma_l}$ is of the form
\[ C_{i}^{\sigma_l} = \{(s,t) \in S_i \times K \mid \alpha_{i}(s) \ < \ \ord(t-\sigma_l(s)) \ < \ \beta_{i}(s) \wedge t-\sigma_l(s) \in \lambda_{i} Q_{n,m}\}.\]
Here $\alpha_{i}, \beta_{i}$ are definable functions $S_i \to \Gamma_K$, $\lambda_{i} \in K \backslash \{0\}$, and $\ord\, \alpha_{i}(s) \geqslant \ord \,\sigma_l(s)$ for all $s \in S_i$. Finally, we may suppose no section of $\Sigma_i$ is definable.  
\end{itemize}
\end{theorem}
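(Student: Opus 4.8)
\textit{The statement above is the main theorem of \cite{cham-cub-leen} and is quoted here for reference; since this note is a companion to that paper, the result is not reproved, and the full argument is given there. For completeness, here is the shape of the proof I would give.}

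\textbf{Step 1: reduction to fibres.} By the standard uniformity/compactness principle for definable families in a $P$-minimal structure, it is enough to produce the decomposition fibrewise over $s \in S$ in such a way that all of the data --- the integers $n, m$ and $k_i$, the definable functions $\alpha_i, \beta_i$, the elements $\lambda_i$, and the multi-balls $\Sigma_i$ --- depends definably on $s$. For fixed $s$ the fibre $X_s \subseteq K$ is a definable subset of the line, hence a finite union of classical $1$-cells about suitable centres; the only genuine obstruction is that, in the absence of Skolem functions, these centres need not be the values of a definable function of $s$.

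\textbf{Step 2: localising the obstruction to a finite definable multi-ball.} The key observation is that the ambiguity in the centre is always \emph{finite} and \emph{definable}: after passing to a finite definable extension of the base (adjoining roots of the definable polynomials describing $X$, equivalently choosing representatives in definable finite sets), a Denef-style centre $c(s)$ does exist. Replacing $c(s)$ by its orbit under the associated finite definable equivalence relation yields, uniformly in $s$, a definable finite subset of $K$, whose cardinality is bounded by a compactness argument. Let $\Sigma$ be a definable choice of balls around these orbit points, shrunk just enough to remain pairwise disjoint and to sit below the relevant valuation so that $\ord\,\alpha_i(s) \geqslant \ord\,\sigma_l(s)$ holds. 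Then $\Sigma$ is definable although no individual section of it is, precisely because the \emph{set} of admissible centres is invariant under the finite action while a single centre is not.

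\textbf{Step 3: partitioning $S$ and cleaning up.} Partition $S$ into finitely many definable pieces $S_i$ on which the number of balls in $\Sigma_{i,s}$ is a constant $k_i$, their configuration (pairwise disjointness and relative valuations) is uniform, and the parameters $\alpha_i, \beta_i, \lambda_i, n, m$ are fixed. On each such piece one verifies directly that $X_i = C_i^{\Sigma_i}$ splits as $C_i^{\sigma_1} \cup \ldots \cup C_i^{\sigma_{k_i}}$, because whether $(s,t) \in X_i$ is controlled by $\ord(t - \sigma_l(s))$ for whichever section $\sigma_l(s)$ is nearest to $t$. Finally, any piece on which $\Sigma_i$ admits a definable section --- in particular every piece where $k_i = 1$, so that $\Sigma_i$ is the graph of a definable ball-valued function --- is absorbed into case (i) as a classical cell; this leaves, for the genuinely clustered pieces of type (ii), the extra clause that no section of $\Sigma_i$ is definable.

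\textbf{Main obstacle.} The crux is Step 2: one must show that the failure of Skolem functions can always be confined to a finite, definable, uniformly bounded ambiguity, i.e.\ that over every parameter there is a definable multi-ball of finite order meeting the set of admissible centres. This rests on $P$-minimal cell decomposition for the one-dimensional fibres together with a compactness argument bounding the orders $k_i$; arranging the valuation inequalities so that the balls of $\Sigma_{i,s}$ stay disjoint and lie below $\alpha_i(s)$ is the fiddly but routine remainder.
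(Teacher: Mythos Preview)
You correctly observe that this theorem is merely restated from \cite{cham-cub-leen} and is not proved in the present note; there is therefore no proof here to compare your sketch against.

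That said, your sketch of Step~2 does not match the mechanism actually used in \cite{cham-cub-leen}, and as written it would not go through in a general $P$-minimal structure. The phrases ``adjoining roots of the definable polynomials describing $X$'' and ``orbit under the associated finite definable equivalence relation'' presuppose an algebraic/Galois-theoretic picture that is simply unavailable: $P$-minimal structures include subanalytic expansions and other non-semialgebraic settings in which definable sets are not cut out by polynomials and there is no finite extension in which a centre becomes definable. Nothing in the theory guarantees a finite definable set of candidate centres obtained in this way.

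What \cite{cham-cub-leen} actually does---as signalled by the list of definitions the present note points to (equivalence class, branching height, signature, large/small cells, multi-ball)---is intrinsic to the valued-field tree. One first shows that for a fixed fibre, the set of \emph{all} admissible centres for a given $1$-cell decomposes into equivalence classes that are balls; one then analyses the tree spanned by these balls via branching heights and signatures, and uses compactness together with a combinatorial case analysis (the large/small dichotomy) to bound the number of maximal balls uniformly in $s$. The multi-ball $\Sigma$ is this definable union of equivalence classes, not an orbit under any group action. Your Steps~1 and~3 are broadly in the right spirit, but the heart of the argument---producing $\Sigma$ and bounding its order---requires this tree-combinatorial machinery rather than the algebraic device you propose.
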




Classical cells are similar to the type of cells used in earlier results by Denef and Mourgues, with a definable function as center. For regular clustered cells, the center is given by a multi-ball of finite order. Recall that a multi-ball (of order $k$) is a definable set where each fiber $(\Sigma_i)_s$ is the disjoint union of $k$ balls of the same radius, which, in the context of cell decomposition, represent equivalence classes of centers. 

For more details we refer to \cite{cham-cub-leen}. In particular, the following definitions may be of relevance to the contents discussed in this note: Definitions 1.4 (leaf), 4.1 (equivalence class), 4.3 (branching height), 4.4 (signature), 5.3 (large/small cells), 6.1 (multi-ball), 6.2 (regular clustered cell). 

\section{A decomposition into cells of minimal order}\label{sec:min}



\begin{definition} A regular clustered cell $C^{\Sigma}$ of order $k$ is of \emph{minimal order} if it cannot be partitioned as a finite  union of regular clustered cells 
$C_i^{\Sigma_i}$ of order $k_i<k$.   
\end{definition}

Some remarks are in order here. In this definititon we allow for the option that, given a regular clustered cell $C^\Sigma$ of order $k$, there may exist a cell condition $C_1$ and a multi-ball $\Sigma_1$ such that $C^{\Sigma} = C_1^{\Sigma_1}$, but the order of $C_1^{\Sigma_1}$ is strictly lower than the order of $C^{\Sigma}$. Also in more general cases their need not be a direct connection between the original $C$ and $\Sigma$ and the $C_i$ and $\Sigma_i$ occurring in the partition.
Further, let us make the following convention: 
for the remainder of the paper, the word `partition' should be read as meaning `\emph{finite} partition'.
\\\\
 Our intention in this section is to rewrite the clustered cell decomposition theorem in terms of cells of minimal order. 
We first prove the following lemma.  

%


\begin{lemma}\label{lem:decomMinimal} Every regular clustered cell of finite order can be partitioned into regular clustered cells of minimal order. 
\end{lemma}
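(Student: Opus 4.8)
The plan is to argue by induction on the order $k$ of the regular clustered cell $C^{\Sigma}$. For $k=1$ there is nothing to prove, since a cell of order $1$ cannot be partitioned into cells of strictly smaller positive order (order $0$ being excluded for regular clustered cells), hence it is already of minimal order. So suppose $k>1$ and that the statement holds for all regular clustered cells of order $<k$. Given $C^{\Sigma}$ of order $k$, we distinguish two cases. If $C^{\Sigma}$ is already of minimal order, we are done: the trivial partition $\{C^{\Sigma}\}$ works. Otherwise, by definition of minimal order, there is a finite partition $C^{\Sigma} = \bigcup_i C_i^{\Sigma_i}$ into regular clustered cells $C_i^{\Sigma_i}$ of order $k_i < k$. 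Each of these has order strictly less than $k$, so the induction hypothesis applies to each $C_i^{\Sigma_i}$, giving a finite partition of it into regular clustered cells of minimal order. Concatenating these finitely many finite partitions yields a finite partition of $C^{\Sigma}$ into regular clustered cells of minimal order, as desired.

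The only subtlety is to make sure the induction is well-founded and that "order $k_i < k$" genuinely decreases a natural-number parameter; since orders are positive integers (a regular clustered cell has order in $\N \setminus \{0\}$ by Definition 6.2 of \cite{cham-cub-leen}), strong induction on $k$ is legitimate, and the recursion terminates. One should also note that the pieces $C_i^{\Sigma_i}$ produced along the way are themselves regular clustered cells — this is precisely what the definition of "minimal order" furnishes — so no auxiliary reduction (e.g.\ to the form required in Theorem \ref{thm:celldecomposition}) is needed here; the statement is entirely internal to the class of regular clustered cells.

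The main (and essentially only) obstacle is purely bookkeeping: confirming that "finite partition" is transitive in the relevant sense, i.e.\ that a finite partition of a set each of whose parts is finitely partitioned yields, after refinement, a finite partition of the original set. This is immediate here because $\N \setminus \{0\}$ is a partition of $X$ indexed by a finite set and each refinement is finite, so the total number of final pieces is a finite sum of finite numbers. Given the convention (adopted just before the lemma) that "partition" always means "finite partition", there is nothing further to check, and the induction goes through cleanly. I do not expect any genuine difficulty beyond this.
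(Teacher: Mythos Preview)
Your proof is correct and follows essentially the same argument as the paper: the paper phrases it as a minimal-counterexample argument (take the least $k$ for which the lemma fails and derive a contradiction), while you phrase it as strong induction on $k$, but these are logically equivalent and the substantive steps are identical. The only cosmetic blemish is the garbled sentence ``$\N \setminus \{0\}$ is a partition of $X$ indexed by a finite set'', which you should clean up, but it does not affect the argument.
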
 
 
\begin{proof}  
Let $k$ be the minimal integer for which there exists a regular clustered cell $C^\Sigma$ of order $k$ that cannot be partitioned as a finite union of regular clustered cells of minimal order. In particular, $C^\Sigma$ is not of minimal order, hence it can be partitioned into finitely many regular clustered cells $C_1^{\Sigma_1},\ldots,C_n^{\Sigma_n}$ each of order $k_i<k$. But by the minimality of $k$, each $C_i^{\Sigma_i}$ can be partitioned into regular clustered cells of minimal order, which provides a decomposition of $C^\Sigma$, contradicting the assumption. 
\end{proof}

Note that there is no canonicity here: it may well be that, by making different choices in each steps of the induction, one can obtain different partitions of the same set where the number of cells in the decomposition and their specific orders $k_i$ may differ.
Using Lemma \ref{lem:decomMinimal}, we obtain the cell decomposition result stated in the theorem below. Note that, in order to obtain this, some of the statement of Theorem \ref{thm:celldecomposition} needs to be relaxed slightly. Specifically, we can no longer require that all clustered cells occurring in the decomposition, are described using the same set $Q_{n,m}$. 

\begin{theorem}\label{thm:clustered2} Let $X\subseteq S\times K$ be a definable set. Then $X$ decomposes as a finite union of classical cells and regular clustered cells of minimal order. Moreover, no regular clustered cell has a definable section. 
\end{theorem}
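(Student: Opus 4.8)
The plan is to read Theorem \ref{thm:clustered2} off from the Clustered Cell Decomposition Theorem \ref{thm:celldecomposition} together with Lemma \ref{lem:decomMinimal}, and then to do a little extra work to secure the ``moreover'' clause. First I would apply Theorem \ref{thm:celldecomposition} to $X$, obtaining a finite partition of $X$ into classical cells and regular clustered cells of finite order. Then I would apply Lemma \ref{lem:decomMinimal} to each of the finitely many regular clustered cells that appear, refining each one into regular clustered cells of minimal order. Collecting everything yields a finite partition of $X$ into classical cells and regular clustered cells of minimal order. This is precisely where the uniform $Q_{n,m}$ from Theorem \ref{thm:celldecomposition} has to be given up: the cells produced by that theorem all share one $Q_{n,m}$, but the refinement provided by Lemma \ref{lem:decomMinimal} is free to equip the smaller pieces with new congruence conditions $\lambda_i Q_{n_i,m_i}$, and there is no reason for these to be mutually compatible.

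It then remains to arrange that none of the regular clustered cells in the resulting decomposition has a definable section. This is not automatic from minimality: the definition of \emph{minimal order} only forbids partitioning a clustered cell into regular clustered cells of strictly smaller order, and imposes nothing on partitions that also involve classical cells, so a clustered cell of minimal order may still admit a definable section. To remove such sections I would peel them off one at a time. Suppose $C^{\Sigma}$ is a regular clustered cell of order $k$ in the decomposition and $\Sigma$ has a definable section $\sigma$. For each $s$ the point $\sigma(s)$ lies in exactly one of the $k$ balls comprising $\Sigma_s$; let $\Sigma' \subseteq \Sigma$ be the definable set whose fiber over $s$ is that ball, and set $\Sigma'' = \Sigma \setminus \Sigma'$. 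Then $\Sigma'$ is a multi-ball of order $1$ carrying the \emph{definable} section $\sigma$, so $C^{\Sigma'}$ is a classical cell with center $\sigma$ (of form (i) of Theorem \ref{thm:celldecomposition}, both squares being $<$), whereas $\Sigma''$ is a multi-ball of order $k-1$ and $C^{\Sigma''}$ is a regular clustered cell of order $k-1$; and $C^{\Sigma} = C^{\Sigma'} \cup C^{\Sigma''}$, so one minimal-order clustered cell has been replaced by a classical cell together with a clustered cell of strictly smaller order.

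Now I would iterate: after each such replacement, re-apply Lemma \ref{lem:decomMinimal} to the new, smaller-order clustered cell, and repeat the section-extraction on any resulting minimal-order clustered cell that still admits a definable section. Each extraction strictly decreases the multiset of orders of the clustered cells occurring in the decomposition (an order-$k$ cell is removed and only cells of order $\leqslant k-1$ are added), so by well-foundedness of the multiset ordering the process terminates; an order-$1$ cell with a definable section is simply absorbed into the list of classical cells. When it terminates, every remaining regular clustered cell is of minimal order (being the output of a final application of Lemma \ref{lem:decomMinimal}) and has no definable section, as required. The step that requires genuine checking rather than mere bookkeeping is that peeling off $\Sigma'$ really does leave a \emph{regular} clustered cell: one must verify that $\Sigma''$ still meets all the requirements of Definition 6.2 of \cite{cham-cub-leen} --- its fibers remain disjoint balls of a common radius, now $k-1$ of them, and the inequalities $\ord\,\alpha(s) \geqslant \ord\,\sigma_l(s)$ relating the cell's lower bound to the sections are inherited --- and likewise that each piece produced by Lemma \ref{lem:decomMinimal} along the way is genuinely a regular clustered cell. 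I expect this verification, and not the combinatorics of the iteration, to be the only real obstacle.
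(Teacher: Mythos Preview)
Your overall strategy is correct and terminates, but it is built on a misreading that forces you into an unnecessary iteration. You assert that ``the definition of \emph{minimal order} only forbids partitioning a clustered cell into regular clustered cells of strictly smaller order, and imposes nothing on partitions that also involve classical cells, so a clustered cell of minimal order may still admit a definable section.'' This is the point where you and the paper diverge. When you peel off the definable section $\sigma$ and form $\Sigma'$ (the multi-ball of order $1$ picking out the ball of $\Sigma_s$ containing $\sigma(s)$), the set $C^{\Sigma'}$ is not merely a classical cell: it is simultaneously a \emph{regular clustered cell of order $1$}, with the very same cell condition $C$. Hence your own splitting $C^{\Sigma}=C^{\Sigma'}\cup C^{\Sigma''}$ is already a partition into regular clustered cells of orders $1$ and $k-1$, both strictly below $k$, and directly contradicts the minimality of $k$ whenever $k>1$.

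The paper's proof uses exactly this observation: after one application of Lemma~\ref{lem:decomMinimal}, any piece of minimal order $k>1$ automatically has no definable section, so only the order-$1$ pieces need to be inspected; those that do admit a definable section are simply relabelled as classical cells $C_i^{\sigma_i}$, and one is finished. Your well-founded multiset iteration is therefore sound but superfluous --- a single pass suffices. The lesson is that a classical cell of the shape appearing here (both squares $<$, $\lambda\neq 0$) is also a regular clustered cell of order $1$, so the two categories in the decomposition are not mutually exclusive, and minimality is stronger than you gave it credit for.
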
  

\begin{proof} By Theorem \ref{thm:celldecomposition} it suffices to show the result for a regular clustered cell $C^{\Sigma}$ which has no definable section. By Lemma \ref{lem:decomMinimal}, $C^{\Sigma}$ can be decomposed into finitely many regular clustered cells of minimal order $C_1^{\Sigma_1},\ldots,C_n^{\Sigma_n}$. It remains to check whether these cells admit a definable section. Note that if $\Sigma_i$ is of order $k_i>1$, then it cannot contain a definable section. Indeed, if such a section were to exist, this would contradict the minimality of $k_i$, since it would be possible to definably split $C_i^{\Sigma_i}$ into a regular clustered cell of order $1$ and a cell of order $k_i-1$ (see also Definition 3.9 from \cite{cham-cub-leen}).  Hence, if $\Sigma_i$ has a definable section it must be of order 1. Put $I:=\{1, \ldots, n\}$ and let $I_0:=\{i\in I \mid \Sigma_i \text{ has a definable section } \sigma_i\}$. Then $C^\Sigma$ is decomposed into
\[
\bigcup_{i\in I_0} C_i^{\sigma_i} \cup \bigcup_{i\in I\setminus I_0} C_i^{\Sigma_i},
\]
which shows the result. 
\end{proof}
Note that the uniformity for $n,m$ from Theorem \ref{thm:celldecomposition} is lost here because a priori, there is no guarantee that the cell conditions $C_i$ in the above proof are defined using the same set $Q_{n,m}$. In the proof of Theorem \ref{thm:celldecomposition}, this uniformity was obtained through a further partitioning of the cell conditions $C_i$. Unfortunately, the cost of this (especially for $m$) is that the order of the associated multi-balls $\Sigma_i$ may increase, and hence we risk losing the minimality. The proof of the following lemma illustrates that this can indeed happen.

\begin{lemma}\label{lem:maximal} Let $C^\Sigma$ be a regular clustered cell of minimal order over $S$. Then for every $s\in S$, every ball $B$ which is an equivalence class of $(C,\Sigma_s)$, is maximally contained in $\Sigma_s$. 
\end{lemma}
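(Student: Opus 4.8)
The plan is to argue by contradiction: suppose for some $s_0 \in S$ there is an equivalence class ball $B$ of $(C, \Sigma_{s_0})$ which is not maximally contained in $\Sigma_{s_0}$, i.e.\ there is a strictly larger ball $B' \supsetneq B$ with $B' \subseteq \Sigma_{s_0}$. The idea is that this ``slack'' should let us enlarge the radius function defining $\Sigma$ on a definable neighbourhood of $s_0$, thereby merging several of the $k$ disjoint balls of $\Sigma_{s}$ into fewer balls, and hence partition $C^\Sigma$ into clustered cells of strictly smaller order --- contradicting minimality.

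First I would recall (from Definitions 4.1 and 6.1 of \cite{cham-cub-leen}) that the balls covering $\Sigma_s$ are exactly the equivalence classes of the relation on centers determined by the cell condition $C$, and that $\Sigma$ being a multi-ball means these $k$ balls all have the same radius $\rho(s)$, given by a definable function $\rho : S \to \Gamma_K$. The failure of maximal containment at $s_0$ means: the radius of the equivalence class $B$ is strictly larger than $\rho(s_0)$ --- equivalently, $\rho(s_0)$ is strictly smaller than the order at which the centers in $B$ stop being $C$-equivalent. Next I would use definability of the branching-height / signature data (Definitions 4.3, 4.4) to show that the set $S' \subseteq S$ of parameters $s$ for which \emph{some} equivalence class ball properly contains the corresponding ball of $\Sigma_s$ is definable, and that $s_0 \in S'$. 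On $S'$ one can then definably choose a larger radius $\rho'(s) > \rho(s)$ --- for instance the minimum over equivalence classes of their true radii, which is definable --- and form the multi-ball $\Sigma'$ over $S'$ with fibres the unions of the radius-$\rho'(s)$ balls around the centers. Because $\rho'(s)$ is at least as large as the branching height, the fibre $\Sigma'_s$ is covered by strictly fewer than $k$ disjoint balls (some classes have merged), so $C^{\Sigma'}$ (restricted appropriately, and keeping the same $\alpha_i, \beta_i, \lambda_i$ data, possibly after a harmless further partition to restore the cell-condition inequalities $\ord\,\alpha(s) \geqslant \ord\,\sigma_l(s)$) is a regular clustered cell of order $<k$. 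Partitioning $S$ into $S'$ and $S \setminus S'$ and applying this on $S'$ gives a partition of $C^\Sigma$ into clustered cells of order $<k$ on the $S'$ part and order $\leqslant k$ on the rest --- and by Lemma~\ref{lem:decomMinimal} one can further refine to reach a genuine contradiction with minimal order, unless $S' = \emptyset$, which is exactly the claim.

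The main obstacle I expect is the bookkeeping in the merging step: enlarging the radius changes the ``number of balls'' fibrewise, but this number need not be constant across $S'$, so one must first partition $S'$ into the definable pieces on which the new order is a fixed constant $k' < k$, and on each piece check that what results is genuinely a \emph{regular} clustered cell in the precise sense of Definition~6.2 --- i.e.\ that the enlarged balls are still pairwise disjoint (guaranteed by taking $\rho'(s)$ no larger than the branching height between distinct surviving classes) and that $\Sigma'$ is still a legitimate center object for the cell condition $C$ (the cell condition only constrains $\ord(t - \sigma_l(s))$ from below by $\alpha_i(s)$, so enlarging the center ball is compatible, possibly after intersecting with the relevant congruence condition). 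Once these definability-and-partition details are handled, the contradiction with the minimality of the order of $C^\Sigma$ is immediate.
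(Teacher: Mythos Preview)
Your proposal has a genuine gap in the key step. You propose to keep the cell condition $C$ unchanged (same $\alpha,\beta,\lambda$) and merely replace $\Sigma$ by a coarser multi-ball $\Sigma'$ whose fibres are unions of fewer, larger balls. But the \emph{order} of a regular clustered cell $C^{\Sigma}$ is the number of $(C,\Sigma_s)$-equivalence classes, and that equivalence relation is determined by $C$, not by how you happen to present $\Sigma$ as a union of balls. With $C$ fixed, the equivalence classes in $\Sigma'_s$ are still the original small balls (at least $k$ of them), so $C^{\Sigma'}$ is not a regular clustered cell of order $<k$: the larger balls of $\Sigma'$ are not equivalence classes of $(C,\Sigma'_s)$, and the required disjoint decomposition $C^{\Sigma'}=C^{\sigma_1}\cup\cdots\cup C^{\sigma_{k'}}$ fails if you only take $k'<k$ sections. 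In short, coarsening $\Sigma$ alone does not lower the order; you must modify the cell condition so that its equivalence classes become the larger balls.

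This is exactly what the paper does, and it is the missing idea in your sketch. The paper first observes that for a \emph{large} cell condition the conclusion is automatic from regularity (all branching lies below $\alpha(s)$), so only the \emph{small} case needs work. There, using uniformity of signatures, one knows the maximal balls in $\Sigma_s$ have size $\gamma(s)+\ell$ for a fixed $\ell<m$. The paper then changes $C$: if $\ell\leqslant 0$ one replaces $C$ by a new small condition $\widetilde{C}$ (with leaf at height $\gamma(s)+\ell-1$ and coset $Q_{1,1}$) and $\Sigma$ by a translated $\widetilde{\Sigma}$, so that the maximal balls become the new equivalence classes; if $0<\ell<m$ one relaxes the $\ac_m$-condition in $C$ to an $\ac_{\ell}$-condition, obtaining $\widehat{C}$ whose equivalence classes are exactly the maximal balls. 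Either way $\widetilde{C}^{\widetilde{\Sigma}}$ (resp.\ $\widehat{C}^{\Sigma}$) defines the same set as $C^{\Sigma}$ but with strictly smaller order, contradicting minimality. Your outline would be salvageable if you inserted this modification of $C$; as written, the ``merging'' step does not go through.
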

\begin{proof}
Note that, if $C^\Sigma$ is defined by a large cell condition, then $\Sigma$ already satisfies the conclusion of this lemma. Indeed, regularity implies that all branching heights are below $\alpha(s)$, and hence the equivalence classes are always maximal balls. 

Suppose now that  $C^{\Sigma}$ is a small clustered cell of (minimal) order $k >1$, and that the maximal balls of $\Sigma_s$ contain more than one $(C,\Sigma_s)$-equivalence class. We need to show that $C^{\Sigma}$ cannot be of minimal order. We will do this by showing explicitly how to decompose $C^{\Sigma}$ as a finite union of regular clustered cells of order strictly smaller than $k$. 
\\\\
 So let $C^\Sigma$ be a clustered cell associated to a small cell condition $C$ with its leaf at height $\gamma(s)$. Because of Lemma \ref{lem:sym}, 
we know that all elements of $(\Sigma)_s$ have the same $N$-signature for all $N \in \N$, uniformly in $S$.  Hence, we can assume that there exists some $\ell \in \Z$ with $\ell < m$, such that for all $s\in S$, $\Sigma_s$ consists of (maximal) balls of the same size $\gamma(s)+\ell$  (here we also use our assumption that maximal balls contain more than one equivalence class). 

First consider the case where $\ell \leqslant 0$. Consider a maximal ball $B$ in $\Sigma_s$. We claim that $C^{B} = B$. 
Take $b \in B$. Then $B$ contains an element $c$ with $\ord(c-b) = \gamma(s)$ and $\acm(c-b) = \lambda$, and hence $B \subset C^{B}$. The other inclusion is proven similarly. Hence, both $\Sigma_s$ and $C^{\Sigma_s}$ consist of $k':= k/q_K^{m-\ell}$ maximal balls. We will rewrite both the cell condition and the set of centers, such that these $k'$ balls become the leaves of the new small cell fibers.  Write $\rho(s) =\gamma(s)+\ell$ for the size of the maximal balls in $\Sigma$. First put
\[ \widetilde{\Sigma}:= \{(s,c') \in S\times K \mid \exists (s,c) \in \Sigma : \ord(c'-c) = \rho(s)-1 \wedge c-c' \in Q_{1,1}\},\]
and let $\widetilde{C}$ be the cell condition
\[ \widetilde{C}(s,c,t):= s\in S \wedge \rho(s)-1 = \ord(t-c) \wedge t-c \in Q_{1,1}.\]
Then clearly, $\widetilde{C}^{\widetilde{\Sigma}}$ is a regular clustered cell defining the same set as $C^{\Sigma}$, yet having strictly smaller order.

For $0<\ell<m$, we can apply the inverse operation of the repartitioning of Lemma-Definition 5.1 from \cite{cham-cub-leen}, part (c). There we observed that, when increasing the value of $m$ (in the set $Q_{n,m}$), the effect was that a single equivalence class was split in smaller equivalence classes. In our case, we will replace the original condition $\ac_m(t-c) \in \lambda Q_{n,m}$ in $C$ by a condition $\ac_{\ell}(t-c) \in \lambda Q_{n,\ell}$, and call the resulting cell condition $\widehat{C}$. Then $\widehat{C}^\Sigma$ will be a clustered cell where the maximal balls of $\Sigma_s$ coincide with the $(\widehat{C}, \Sigma_s)$-equivalence classes. Since moreover, the order of $\widehat{C}^\Sigma$ is smaller than the order of  $C^\Sigma$, this completes the proof.  
\end{proof} 


\section{Equivalent questions} \label{sec:questions}

Let us first introduce the following definition. Here $\mathbb{B}$ denotes the set of balls in $K$.

\begin{definition}
Let $\Sigma$ be a multi-ball of order $k$ over $S$. 
\item We say that $\Sigma$ is \emph{$\subseteq$-maximal} if for every $s\in S$, every ball $B$ among the $k$ balls whose union is $\Sigma_s$, is maximal with respect to inclusion in $\Sigma_s$.
\item We say that a $\subseteq$-maximal multi-ball $\Sigma$ \emph{admits finite Skolem functions} if there exists a definable function $f:S\to \mathbb{B}$ such that for all $s\in S$, $f(s)$ is a maximal ball of $\Sigma_s$.
\end{definition}
When we say that a function $f:S\to \mathbb{B}$ is definable, we simply mean that its graph should correspond to a definable set $H\subseteq S\times K$, such that $H_s$ is a ball for all $s\in S$. 

\begin{lemma}\label{question} The following questions are equivalent:
\begin{enumerate}
\item \label{qq1}  Can every regular clustered cell of finite order be partitioned into finitely many regular clustered cells of order 1?
\item \label{q2} Is every regular clustered cell of minimal order of order 1? 
\item \label{q3} Does every $\subseteq$-maximal multi-ball admit finite Skolem functions?
\end{enumerate}
\end{lemma}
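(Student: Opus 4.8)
The plan is to prove the three statements equivalent by establishing the cyclic chain of implications $(\ref{qq1}) \Rightarrow (\ref{q2}) \Rightarrow (\ref{q3}) \Rightarrow (\ref{qq1})$, or rather to organize it around the two natural "bridges": $(\ref{qq1}) \Leftrightarrow (\ref{q2})$ on the one hand, and $(\ref{q2}) \Leftrightarrow (\ref{q3})$ on the other.

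\begin{proof}
We show $(\ref{qq1})\Rightarrow(\ref{q2})\Rightarrow(\ref{q3})\Rightarrow(\ref{qq1})$.

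\medskip\noindent\emph{$(\ref{qq1})\Rightarrow(\ref{q2})$.} Suppose every regular clustered cell of finite order can be partitioned into finitely many regular clustered cells of order $1$, and let $C^\Sigma$ be a regular clustered cell of minimal order, say of order $k$. By hypothesis $C^\Sigma$ partitions into finitely many regular clustered cells of order $1$. If $k>1$, this exhibits $C^\Sigma$ as a finite union of regular clustered cells of order $1<k$, contradicting minimality; hence $k=1$.

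\medskip\noindent\emph{$(\ref{q2})\Rightarrow(\ref{qq1})$.} Conversely, assume every regular clustered cell of minimal order has order $1$, and let $C^\Sigma$ be a regular clustered cell of finite order. By Lemma \ref{lem:decomMinimal}, $C^\Sigma$ partitions into finitely many regular clustered cells of minimal order, each of which has order $1$ by assumption. This proves (\ref{qq1}). So $(\ref{qq1})$ and $(\ref{q2})$ are equivalent; the remaining work is the equivalence with $(\ref{q3})$.

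\medskip\noindent\emph{$(\ref{q2})\Rightarrow(\ref{q3})$.} Let $\Sigma$ be a $\subseteq$-maximal multi-ball of order $k$ over $S$. We must produce a definable $f:S\to\mathbb{B}$ selecting a maximal ball of each $\Sigma_s$. The idea is to attach to $\Sigma$ a cell condition making $\Sigma$ (up to replacing it by a multi-ball with the same maximal balls) into a regular clustered cell of minimal order; by hypothesis that order is $1$, and a multi-ball of order $1$ \emph{is} the graph of a definable function $S\to\mathbb{B}$. Concretely, if $\Sigma_s$ consists of $k$ disjoint maximal balls of (valuative) radius $\rho(s)$, one takes a large cell condition $C$ with all branching heights below $\alpha(s)$ — for instance $\alpha(s)=\rho(s)-1$ and no upper bound — so that $C^\Sigma$ is a \emph{regular} clustered cell whose $(C,\Sigma_s)$-equivalence classes are exactly the maximal balls of $\Sigma_s$. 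By Lemma \ref{lem:decomMinimal} we may partition $C^\Sigma$ into regular clustered cells of minimal order; by $(\ref{q2})$ each has order $1$. One then checks that such an order-$1$ piece, being a regular clustered cell $C_i^{\sigma_i}$ with $\sigma_i$ definable, yields a definable ball-valued function on its base $S_i$, and since the $S_i$ partition $S$ and on each piece the selected ball lies inside (hence equals, by $\subseteq$-maximality and a size count) a maximal ball of $\Sigma_s$, gluing gives the desired $f$.

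\medskip\noindent\emph{$(\ref{q3})\Rightarrow(\ref{q2})$.} Let $C^\Sigma$ be a regular clustered cell of minimal order $k$; we show $k=1$. By Lemma \ref{lem:maximal}, $\Sigma$ is $\subseteq$-maximal: every $(C,\Sigma_s)$-equivalence class is a maximal ball of $\Sigma_s$. By $(\ref{q3})$, $\Sigma$ admits a finite Skolem function $f:S\to\mathbb{B}$, i.e.\ a definable section (in the sense that $f(s)$ is a maximal ball, from which one extracts a definable point-section of $\Sigma$, or uses the ball itself as in the order-$1$ cell format). But then, exactly as in the proof of Theorem \ref{thm:clustered2} (using Definition 3.9 of \cite{cham-cub-leen}), we can definably split $C^\Sigma$ into a regular clustered cell of order $1$ — carved out by $f$ — and a regular clustered cell of order $k-1$. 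If $k>1$ this contradicts minimality. Hence $k=1$, completing the cycle.
\end{proof}

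The main obstacle I expect is the careful bookkeeping in $(\ref{q2})\Rightarrow(\ref{q3})$: one must pass from an abstract $\subseteq$-maximal multi-ball to an honest regular clustered cell so as to be allowed to invoke $(\ref{q2})$, and this requires choosing a cell condition $C$ whose equivalence classes match the prescribed maximal balls and verifying the resulting $C^\Sigma$ is \emph{regular} (all branching heights below $\alpha$) — essentially the large-cell computation already done in the first paragraph of the proof of Lemma \ref{lem:maximal}. The converse directions are lighter, relying on Lemma \ref{lem:decomMinimal}, Lemma \ref{lem:maximal}, and the standard "peel off an order-$1$ cell along a definable section" manoeuvre from \cite{cham-cub-leen}; the only subtlety there is matching the two notions of "definable section" — a definable point-valued section versus a definable ball-valued (finite Skolem) function — which is routine since in the order-$1$, $\subseteq$-maximal setting the ball and any of its points determine each other definably.
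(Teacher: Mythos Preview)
Your handling of $(\ref{qq1})\Leftrightarrow(\ref{q2})$ and of $(\ref{q3})\Rightarrow(\ref{q2})$ matches the paper's argument and is correct.

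For $(\ref{q2})\Rightarrow(\ref{q3})$ you take a genuinely different route from the paper. The paper does \emph{not} try to turn $\Sigma$ into a regular clustered cell by hand; instead it simply applies Theorem~\ref{thm:clustered2} to $\Sigma$ viewed as a definable subset of $S\times K$. This yields a partition into classical cells $D_i$ and regular clustered cells $C_i^{\Sigma_i}$ of minimal order, hence of order $1$ by hypothesis. After arranging that all pieces are over $S$, the paper picks a \emph{single} piece, takes the unique maximal ball of its fiber over each $s$, and lets $f(s)$ be the maximal ball of $\Sigma_s$ containing it. One also needs the small observation that $k>1$ forces the decomposition to have more than one piece (otherwise a single order-$1$ cell fiber would have to equal $\Sigma_s$, which is a union of $k>1$ equal-radius balls).

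Your approach---manufacture a large cell condition $C$ with $\alpha(s)=\rho(s)-1$, apply Lemma~\ref{lem:decomMinimal} to $C^\Sigma$, and glue---is in the right spirit but has a couple of soft spots you should tighten. First, the pieces $C_i^{\Sigma_i}$ partition $C^\Sigma$ as a subset of $S\times K$; their projections $S_i$ need not partition $S$, so ``gluing over the $S_i$'' is not available as stated. The paper's fix is to reduce to all pieces living over $S$ and then use just one of them. Second, the assertion that the selected ball ``equals'' a maximal ball of $\Sigma_s$ is too strong: what you actually get is a ball \emph{contained} in $\Sigma_s$, hence contained in a unique maximal ball of $\Sigma_s$, and that maximal ball is what $f(s)$ should return. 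Finally, the construction of $C$ itself (``no upper bound'') needs to be checked against the actual definition of a regular clustered cell; the paper sidesteps this entirely by invoking Theorem~\ref{thm:clustered2} as a black box rather than building a bespoke cell condition.
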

\begin{proof}
We first show that Questions \ref{qq1} and \ref{q2} are equivalent. By Lemma \ref{lem:decomMinimal}, if the answer to Question \ref{q2} is yes, then Question \ref{qq1} has a positive answer as well.
 Now suppose that Question \ref{qq1} has an affirmative answer and let $C^\Sigma$ be a regular clustered cell of minimal order $k>1$. By assumption, it is equal to a finite union of regular clustered cells of order 1, contradicting the minimality of the order of $C^\Sigma$. Hence Question \ref{q2} has affirmative answer too.
 
Now let us show that Questions  \ref{q2} and \ref{q3} are equivalent as well. Suppose that Question \ref{q3} has an affirmative answer. Let $C^\Sigma$ be a regular clustered cell of minimal order $k>1$. By Lemma \ref{lem:maximal}, we may assume that $\Sigma$ is $\subseteq$-maximal. Pick a finite Skolem function for $\Sigma$, say with graph $H \subseteq S \times K$. Then we have that $C^\Sigma$ is equal to the union of $C^{H}$ and $C^{\Sigma\setminus H}$. Since $C^{H}$ has order 1 and $C^{\Sigma\setminus H}$ has order $k-1$, this contradicts the minimality of $k$. 

Now suppose that every regular clustered cell of minimal order has order 1 (i.e., Question \ref{q2} has an affirmative answer) and let $\Sigma$ be a $\subseteq$-maximal multi-ball of order $k>1$ over $S$. Note that by the definition of multi-balls, these $k$ maximal balls have the same radius $\gamma(s)$ for every $s \in S$. 
Using Theorem \ref{thm:clustered2}, the set $\Sigma$ can be partitioned as a finite union of classical cells $D_i$ and regular clustered cells $C_i^{\Sigma_i}$ of minimal order, i.e.
\[\Sigma = \bigcup_{i=1}^{r_1} D_i \cup \bigcup_{i=1}^{r_2} C_i^{\Sigma_i}.\] 
By our assumptions, the cells $C_i^{\Sigma_i}$ all have order 1. Without loss of generality, we may also assume that all cells in the decomposition are over $S$. 
Now put
\begin{align*}
 \gamma_{D_i}(s)&:= \min\{\gamma \in K \mid (D_i)_s \text{ contains a ball of radius } \gamma\},\\
 \gamma_{C_i}(s)&:= \min\{\gamma \in K \mid (C_i^{\Sigma_i})_s \text{ contains a ball of radius } \gamma\}.
 \end{align*}
We will first explain why $k>1$ implies that $r_1 + r_2 >1$. Suppose that $r_1 + r_2 =1$. We will only consider the case $(r_1, r_2) = (0,1)$, but the other case is completely similar. In this case, we have that $\Sigma_s = (C_1^{\Sigma_1})_s$ for all $s \in S$. 
 However, note that $\Sigma_s$ is the union of $k >1$ disjoint maximal balls of the same size, while the fiber $(C_1^{\Sigma_1})_s$ can only contain a single ball of any given radius. This gives a contradiction, and hence it must be the case that $r_1 + r_2>1$.
 
  Moreover, since $\Sigma$ is $\subseteq$-maximal, we may assume that $\gamma(s)\leqslant \gamma_{D_i}(s)$ and $\gamma(s)\leqslant \gamma_{C_i}(s)$. Write $D_i^{\gamma_{D_i}}$, resp.\ $C_i^{\gamma_{C_i}}$ for the subset of $D_i$, resp.\ $C_i^{\Sigma_i}$ whose fibers are the (unique) maximal balls of $(D_i)_s$, resp. $(C_i^{\Sigma_i})_s$. 
  If $r_1 \neq 0$, we can define $f$ as 
\[
f(s)= B \Leftrightarrow B \text{ is a ball, maximally contained in $\Sigma_s$ and } B \cap D_1^{\gamma_{D_1}}\neq \emptyset,  
\]
otherwise we put
\[
f(s)= B \Leftrightarrow B \text{ is a ball, maximally contained in $\Sigma_s$ and } B \cap C_1^{\gamma_{C_1}}\neq \emptyset.  
\]
Then $f$ provides a finite Skolem function for $\Sigma$.   
\end{proof}

An indication that the above questions may well have a negative answer comes from Remark 4.8 of \cite{HMRC-2015}. In this remark, it is shown that there exist elementary extensions $K$ of $\QQ_p$ (for the language of rings), in which the set of balls is not rigid. In particular, the authors show that there exists an automorphism $\sigma$ of $K$ and a ball $B$ such that the orbit of $B$ under $\sigma$ has size $p$. 
This seems to imply that the answer to the questions in Lemma \ref{question} would be no, at least if there exists such a set of $p$ balls which  is also $\subseteq$-maximal, i.e., this set of $p$ balls does not cover a bigger ball of $K$.  

In future work, we hope to use this observation as a basis for constructing an example showing that the question in its third form has a negative answer. The basic idea is to try building a parametrized family of subsets having fibers that are such sets of $p$ balls.  However, actually constructing such a higher-order multi-ball (and proving that no finite skolem function exists), appears to be a rather non-trivial exercise. 

Part of the complication, especially for the one-sorted case, lies in the fact that one needs to work within a structure that does not admit definable Skolem functions. However, such structures have not been studied in much detail as yet, given that a first concrete example was only very recently constructed by the second author and K. Huu Nguyen \cite{cubi-nguyen}.

\section{Properties of clustered cells of minimal order} \label{sec:biggerorder}

In this section we will show different properties of regular clustered cells of minimal order. 
Note that the results of this sections are only of relevance in case Question \ref{q1} has a negative answer. Otherwise, all of these considerations, while still true, are essentially trivial, as we would only need to consider clustered cells of order 1.

\begin{lemma}\label{lemma:ac} Let $C^{\Sigma}$ be a regular clustered cell. Then $C^\Sigma$ can be expressed as the union of finitely many regular clustered cells $C_i^{\Sigma_i}$ of minimal order such that on each $\Sigma_i$, $\ac_1(c)$ is constant for all elements $c$ of $(\Sigma_i)_s$.
\end{lemma}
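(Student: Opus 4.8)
\emph{Proof proposal.} The plan is to argue by induction on the order $k$ of $C^{\Sigma}$, the induction hypothesis being the statement of the lemma for all regular clustered cells of order $<k$. Since the residue field of $K$ is finite, $\ac_1$ takes only finitely many values, so every partition below is finite. The first step is a finite definable partition of the base $S$ after which $\ac_1$ is well behaved on the centers. The key observation is that, for a regular clustered cell, no ball $B_l(s)$ of $\Sigma_s$ can meet $0$: if it did, it would contain elements of arbitrarily large order, and since the distinguished section satisfies $\sigma_l(s)\in B_l(s)$, the inequality $\alpha(s)\geqslant\ord\,\sigma_l(s)$ of Theorem~\ref{thm:celldecomposition} (together with $\alpha(s)\in\Gamma_K$) would force $\alpha(s)\geqslant\gamma(s)$, where $\gamma(s)$ is the common radius of $\Sigma_s$; this contradicts the requirement that $C^{\sigma_l}$ be independent of the chosen representative in $B_l(s)$, which fails as soon as $\alpha(s)\geqslant\gamma(s)$. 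Hence each $B_l(s)$ has a well-defined order, strictly below $\gamma(s)$, so $c\mapsto\ac_1(c)$ is constant on each $B_l(s)$. Partition $S$ further so that, in addition, $T:=\{\ac_1(c)\mid c\in\Sigma_s\}$ and the number $k_\zeta$ of balls $B_l(s)$ carrying each value $\zeta\in T$ are independent of $s$.

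Now work over a single such piece. If $|T|\geqslant 2$, set
\[
\Sigma^{(\zeta)}:=\{(s,c)\in\Sigma\mid\ac_1(c)=\zeta\}\qquad(\zeta\in T).
\]
Each $\Sigma^{(\zeta)}$ is a multi-ball of order $k_\zeta<k$ (its fibre is the union of the $k_\zeta$ balls of $\Sigma_s$ of radius $\gamma(s)$ on which $\ac_1\equiv\zeta$), each $C^{\Sigma^{(\zeta)}}$ is a regular clustered cell — regularity, the inequality $\alpha(s)\geqslant\ord\,\sigma_l(s)$ and non-definability of sections all persist because $\Sigma^{(\zeta)}\subseteq\Sigma$ and the cell condition is unchanged — and $C^{\Sigma}=\bigcup_{\zeta\in T}C^{\Sigma^{(\zeta)}}$, since the pieces $C^{\sigma_l}$ over the individual balls $B_l(s)$ partition $C^{\Sigma}$ and each lies in exactly one $C^{\Sigma^{(\zeta)}}$. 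Applying the induction hypothesis to each $C^{\Sigma^{(\zeta)}}$ finishes this case. If instead $|T|=1$, then $\ac_1$ is already constant on all of $\Sigma_s$; if moreover $C^{\Sigma}$ over this piece is of minimal order, it is itself an admissible output, while if it is not, then by definition (constructively, via the repartitioning in the proof of Lemma~\ref{lem:maximal}) it partitions into regular clustered cells of order $<k$, to which the induction hypothesis applies. The base case $k=1$ is subsumed: there $\Sigma$ is the graph of a definable ball-valued function, no ball meets $0$, the cell is already of minimal order, and after the partition of the first paragraph $\ac_1$ is constant on $\Sigma_s$.

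The delicate point — and the reason $\ac_1$-constancy does not simply come for free once minimal order has been reached — is precisely the case $|T|=1$ with $C^{\Sigma}$ not of minimal order: here the $\ac_1$-split yields no order drop, so one must fall back on the order-reducing repartitioning of Lemma~\ref{lem:maximal}, which a priori need not preserve $\ac_1$-constancy of the new centers; this is harmless only because one immediately re-applies the induction hypothesis at the strictly smaller order, where the $\ac_1$-analysis is carried out afresh. The remaining work is bookkeeping: checking that no ball of $\Sigma_s$ meets $0$ after each repartitioning (and dealing with the degenerate alternative by replacing such a center ball with the constant center $0$, on which $\ac_1\equiv 0$), verifying definability of the various $S$-partitions, and confirming termination of the induction, which holds because the order strictly decreases in every non-terminal branch.
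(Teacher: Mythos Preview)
Your proof is correct and follows essentially the same inductive strategy as the paper: split $S$ according to whether $\ac_1$ is already constant on $\Sigma_s$; in the non-constant case partition $\Sigma$ by $\ac_1$-value into pieces of strictly smaller order and induct; in the constant case either stop (if already of minimal order) or invoke non-minimality to drop the order and induct. Two minor remarks: the paper disposes of the ``no ball meets $0$'' point more directly by using that $\ord(c)$ is constant on $\Sigma_s$ (a fact built into regularity), rather than via the inequality $\alpha(s)\geqslant\ord\,\sigma_l(s)$; and in the $|T|=1$, non-minimal branch you need only the \emph{definition} of non-minimal order to get a partition into cells of strictly smaller order---your appeal to the repartitioning of Lemma~\ref{lem:maximal} is unnecessary (that lemma addresses a different situation). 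The paper also notes, as you should, that a further finite partition of $S$ may be needed after the $\ac_1$-split to restore regularity.
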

\begin{proof}
%
%
%


Let $C^{\Sigma}$ be a regular clustered cell of order $k$. We prove the result by induction on $k$. For $k=1$, the result holds automatically, since within a maximal ball $B$ of $\Sigma_s$, $\ac_1(c)$ will always be constant, since we know that $\ord(c)$ is constant for all $c \in \Sigma_s$. 
Indeed, for a maximal ball to contain elements $c, c'$ with $\ac_1(c) \neq \ac_1(c')$, would imply that this is a ball around 0. But since also $\ord(c)$ is constant, this would imply that $B = \{0\}$, contradicting the assumption that $C^{\Sigma}$ is a regular clustered cell.

For the inductive case, first partition $S$ as $S_1 \cup S_2$, where $s \in S_1$ if $\ac_1(c)$ is constant on $\Sigma_s$, and $S_2 = S \backslash S_1$. Put $\Sigma_1: = \Sigma_{|S_1}$ and $\Sigma_2: = \Sigma_{|S_2}$. For $C^{\Sigma_1}$, either this is already a clustered cell of minimal order, in which case we are done, or $C^{\Sigma_1}$ can be partitioned into regular clustered cells of strictly lower order, in which case the result follows by induction. 

Next, for $C^{\Sigma_2}$, we can do the following.
Partitioning $S$ if necessary, we may assume that the set $\ac_1((\Sigma_{2})_s) = \{b_1, \ldots, b_r\}$ is independent of $s$. Then, putting $\Sigma_{2,i}:= \{(s,c) \in \Sigma_2 \mid \ac_1(c) = b_i\}$, we obtain a partition of $C^{\Sigma_2}$ into regular clustered cells $C^{\Sigma_{2,i}}$. Note that an additional partitioning of $S$ may be required to restore the regularity. 
Since the order of each $C^{\Sigma_{2,i}}$ is strictly smaller than $k$, the result follows by induction.
%
%
\end{proof}

\begin{lemma}\label{lem:sym} Let $C^\Sigma$ be a regular clustered cell of minimal order $k$. Then there is $d \in \N$ such that for every $s \in S$, $\Sigma_s$ has exactly $d$ branching heights and when $d \geqslant 1$, there exist $k_1, \ldots, k_d \in \N$ such that all elements of each $\Sigma_s$ have $d$-signature $(k_1, \ldots, k_d)$.
\end{lemma}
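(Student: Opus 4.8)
The plan is to argue by induction on the order $k$ of $C^{\Sigma}$. When $k=1$ the fibre $\Sigma_s$ is a single ball, which has no branching height, so $d=0$ and there is nothing to prove. For the inductive step, fix a regular clustered cell $C^{\Sigma}$ of minimal order $k>1$ over $S$. Recall from Lemma~\ref{lem:maximal} that every $(C,\Sigma_s)$-equivalence class is a maximal ball of $\Sigma_s$; in particular the order $k$ equals the number of maximal balls (``leaves'') of $\Sigma_s$, for every $s$. The whole proof will be driven by one mechanism: because a union of whole ``branches'' of the tree $\Sigma_s$ is again a union of balls of the leaf-radius, every definable partition of the set of branches (at any fixed level) induces a partition of $\Sigma$ into genuine sub-multi-balls, hence a partition of $C^{\Sigma}$ into regular clustered cells (after a harmless further partition of $S$ to restore regularity); if such a partition is nontrivial, each piece contains strictly fewer than $k$ leaves, so each resulting cell has order $<k$, contradicting minimality.

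Concretely, first isolate the lowest branching height: for each $s$ let $h_1(s)$ be the radius of the smallest ball containing $\Sigma_s$, a definable $\Gamma_K$-valued function, which is the smallest branching height of $\Sigma_s$. The root splits into its top branches, the classes of leaves under ``$\ord(c-c')>h_1(s)$''; let $K_1(s)\geqslant 2$ be their number, and record the (multisets of) their radii and of the numbers of leaves they contain — all definable on $S$, and the leaf-numbers sum to $k$. Applying the mechanism above: if for some $s$ the top branches of $\Sigma_s$ fail to all have the same radius, split off those of minimal radius; if they fail to all contain the same number of leaves, partition by that number; since each top branch has $\geqslant 1$ leaf and there are $\geqslant 2$ of them, each piece has $<k$ leaves, contradicting minimality. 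Hence there are constants $K_1,k'$ and definable functions $h_1<h_2$ with $h_2\colon S\to\Gamma_K$ the common radius of the top branches, such that for every $s$, $\Sigma_s$ has exactly $K_1$ top branches, each a ball of radius $h_2(s)$ containing exactly $k'$ leaves, and $k=K_1k'$. If $k'=1$ we are done, with $d=1$ and every leaf of $\Sigma_s$ of $1$-signature $(k)$. If $k'\geqslant 2$, then $K_1\leqslant k/2<k$, and we repeat the analysis one level down, inside the top branches, forcing $K_2\geqslant 2$, the common radius $h_3(s)$ of the sub-branches and their common leaf-count to be uniform, with $k=K_1K_2k''$. Since each branch count is $\geqslant 2$, the leaf-count strictly decreases, so after finitely many steps, say $d$, it reaches $1$; this yields definable functions $h_1<\cdots<h_d$ and constants $K_1,\dots,K_d\geqslant 2$ such that, for every $s$, $\Sigma_s$ has exactly the branching heights $h_1(s),\dots,h_d(s)$ and every leaf has $d$-signature $(K_1,\dots,K_d)$. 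Taking $k_j=K_j$ completes the induction.

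The step I expect to be the main obstacle is proving that a non-uniformity really forces a decomposition into cells of \emph{strictly} smaller order, and in particular disentangling non-uniformity occurring within a single fibre from non-uniformity occurring across fibres: the order of a clustered cell is the maximum of the fibrewise leaf-counts, so a mere partition of $S$ by ``tree shape'' need not lower the order, and one must rule out a minimal-order cell whose fibres realise two genuinely different (but individually regular) tree shapes. Handling this cross-fibre case cleanly is likely to require, beyond Lemma~\ref{lem:maximal} and a preliminary definable partition of $S$, the uniform-finiteness features of $P$-minimal structures, together with Lemma~\ref{lemma:ac} for the residue-field bookkeeping implicit in the notion of signature. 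The remaining points — that regularity can always be restored by partitioning $S$, and that the families of branches used at each level are definable — are routine.
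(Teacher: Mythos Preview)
Your overall mechanism --- partition the leaves of $\Sigma_s$ by a definable tree-structural invariant, and observe that a nontrivial such partition yields regular clustered cells of strictly smaller order, contradicting minimality --- is exactly the paper's. Two points, one a genuine (if easily repaired) error and one that dissolves your stated obstacle.

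First, you invoke Lemma~\ref{lem:maximal}, but in the paper that lemma's proof \emph{uses} Lemma~\ref{lem:sym}; as written this is circular. The fix is painless: drop the identification with maximal balls and work directly with the $k$ equivalence classes as the leaves of the tree. Nothing else in your argument needs it.

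Second, the obstacle you flag --- cross-fibre non-uniformity --- does not arise. A multi-ball of order $k$ has exactly $k$ balls in \emph{every} fibre by definition (so your parenthetical ``order $=$ maximum of fibrewise leaf-counts'' is incorrect), and regularity of a clustered cell means precisely that the tree structure of $\Sigma_s$ is uniform in $s$. The paper simply cites this: if one fibre contains leaves with differing signatures, all do. With that in hand the paper partitions $\Sigma$ directly by the signature components themselves, setting $\Sigma_{(l_1)}:=\{(s,c)\in\Sigma\mid k_1(c)=l_1\}$, then refining by the value of $k_2(c)$, and so on until the order drops; each piece is a pruned subtree of $\Sigma_s$ with no new branching heights, so regularity is automatic. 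This is shorter than your level-by-level analysis from the root (tracking both branch radii and branch leaf-counts), and needs neither Lemma~\ref{lemma:ac} nor any uniform-finiteness input. Your bottom-up route would also succeed once the cross-fibre worry is set aside, but the signature-based partition is the cleaner packaging of the same idea. (Incidentally, the paper's convention places $k_d$ at the \emph{lowest} branching height, so your $K_1$ is the paper's $k_d$, and ``taking $k_j=K_j$'' reverses the indexing.)
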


\begin{proof} The existence of $d$ follows already from regularity. The statement is trivial if $k=1$, so we may assume that $k>1$, which in turn implies that $d\geqslant 1$. For  each $l \in \N$, write $(k_1(c), \ldots, k_l(c))$ for the $l$-signature of $c \in \Sigma_s$.
We will give a proof by contradiction, so assume that for at least some $s \in S$, the $d$-signature is not fixed on $\Sigma_s$.

Recall that we are assuming that $C^{\Sigma}$ is a regular clustered cell, meaning that the tree structure is uniform in $s$, and hence for every $s\in S$, $\Sigma_s$ will contain elements with at least two different signatures. We will show that in this situation, $C^{\Sigma}$ cannot be minimal, by giving an explicit decomposition into clustered cells of strictly lower order.
First, partition $\Sigma$ in sets $\Sigma_{(l_1)}$, for $l_1 \in \{1, \ldots, q_K\}$, which are defined as
\[\Sigma_{(l_1)}:= \{(s,c) \in \Sigma \mid k_1(c) = l_1\}.\]
Note that some of these sets may be empty. This induces a partition of $C^{\Sigma}$ into the union of the regular clustered cells $C^{\Sigma_{(l_1)}}$. (It should be clear that the uniformity of the tree structure is preserved. Further, since the tree of $(\Sigma_{(l_1)})_s$ is a pruning of the original tree of $\Sigma$, and no new branching heights are introduced, we still have that all branching happens below $\alpha(s)$.)

This process can now be repeated inductively. If we fix a clustered cell $C^{\Sigma_{(l_1)}}$, the 1-signature is fixed. This clustered cell can now be partitioned into cells $C^{\Sigma_{(l_1,l_2)}}$, where $C^{\Sigma_{(l_1,l_2)}}$ is defined as
\[\Sigma_{(l_1,l_2)}:= \{(s,c) \in \Sigma_{(l_1)} \mid k_2(c) = l_2\},\]
again for $l_2 \in \{1, \ldots, q_K\}$. Repeating the process until we have a partition into regular clustered cells $C^{\Sigma_{(l_1, \ldots, l_N)}}$ having strictly smaller order, yields the claim of the lemma. 
\end{proof}

For a regular clustered cell of minimal order $C^\Sigma$, the tuple $(k_1, \ldots, k_d)$, given by the previous lemma, will be called the \emph{tree type} of $C^\Sigma$. 

\begin{lemma}\label{lemma:pdiv}
Let $C^{\Sigma}$ be a regular clustered cell over $S$ of minimal order, with tree type $(k_1, \ldots, k_d)$. Then $p \mid k_d$. 
\end{lemma}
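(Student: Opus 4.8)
\emph{Proof proposal.} Setup. Recall that the residue field of a $P$-minimal structure is $\F_p$, so $q_K=p$ and every ball has at most $p$ maximal proper sub-balls. If $k=1$ there is nothing to prove, so I assume $k>1$; then $d\geqslant 1$ and, since each branching node has at least two children, $k_d\geqslant 2$. Moreover $k_d\leqslant q_K=p$: indeed, if $R_s$ denotes the smallest ball containing $\Sigma_s$, then $R_s$ is exactly the ball obtained just above the first branching seen from the root, and its children at the next radius (at most $q_K=p$ of them) are precisely the $k_d$ root-children of $\Sigma_s$. Hence the statement $p\mid k_d$ is equivalent to $k_d=p$, and it suffices to rule out $2\leqslant k_d\leqslant p-1$.

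Strategy. So suppose $2\leqslant k_d\leqslant p-1$; the plan is to contradict minimality by exhibiting a partition of $C^\Sigma$ into regular clustered cells of order strictly less than $k$ (then conclude via Lemma \ref{lem:decomMinimal} and, to restore regularity, the pruning argument of Lemma \ref{lem:sym}). First, since $\Sigma$ is definable and, for each $s$, $\Sigma_s$ is a finite union of balls admitting a common enclosing ball, the maps $s\mapsto R_s\in\mathbb{B}$ and $s\mapsto h(s):=$ (valuative radius of $R_s$) are definable. By hypothesis the $k_d$ root-children of $\Sigma_s$ occupy only $k_d$ of the $p$ maximal sub-balls of $R_s$, so at least one such sub-ball $W_s$ is disjoint from $\Sigma_s$, and $\{(s,x)\mid x\in W_s\}$ is definable. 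The plan is to use the definable data $(R_s,W_s)$ — together with Lemma \ref{lemma:ac} (so that we may assume $\ac_1$ is constant on $\Sigma_s$) and the repartition operations on the sets $Q_{n,m}$ from Lemma-Definition 5.1 of \cite{cham-cub-leen} — to produce a definable function $f\colon S\to\mathbb{B}$ selecting, for each $s$, one root-child of $\Sigma_s$. Granting such an $f$ with graph $H$, and writing $H'$ for the ``fattening'' of $H$ down to the leaf radius of $\Sigma$ (i.e. $\Sigma\cap\{(s,t)\mid \ord(t-c)\geqslant h(s)+1 \text{ for some }(s,c)\in H\}$), one gets $C^\Sigma = C^{\Sigma\cap H'}\cup C^{\Sigma\setminus H'}$, where, after possibly refining $S$ to restore the uniformity of the tree structure, the two pieces are regular clustered cells of orders $k/k_d$ and $k-k/k_d=(k/k_d)(k_d-1)$ respectively, both $<k$ since $k_d\geqslant 2$ — the desired contradiction.

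Main obstacle. The nontrivial point is the construction of the definable root-child selector $f$ from $(R_s,W_s)$. This is exactly where the absence of definable Skolem functions bites: I cannot assume the definable family of balls $R_s$ carries a definable family of centers, so the family of maximal sub-balls of $R_s$ must be handled as a definable family of torsors under $(\F_p,+)$, on which no element is canonically distinguished. The idea is that the presence of the distinguished \emph{empty} sub-ball $W_s$ rigidifies this torsor enough, and that, once $\ac_1$ is normalised, the admissible patterns of occupied sub-balls are constrained by the way $\Sigma$ is produced in the clustered cell decomposition of \cite{cham-cub-leen}. In fact I expect that tracing that construction shows directly that the number of occupied sub-balls of $R_s$ equals the index of a $Q_{n,m}$-type subgroup at a scale at which $0\notin R_s$, hence is a power of $p$; being at most $p$ and at least $2$, it would then be forced to equal $p$, making the selector discussion unnecessary. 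Deciding which of these two routes actually goes through — the explicit definable selector built from $(R_s,W_s)$, or the structural $p$-power-index fact about $\Sigma$ — is the crux of the argument, and I would attack the second route first.
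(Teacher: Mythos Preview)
Your proposal is an outline rather than a proof, and it contains both a factual error and a genuine gap at the point you yourself flag as the crux.

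\textbf{The error about $q_K$.} It is not true that the residue field of a $P$-minimal structure is $\F_p$: $P$-minimal structures are built over $p$-adically closed fields, whose residue field is $\F_{p^f}$ for some $f\geqslant 1$, so $q_K=p^f$ in general. Thus $p\mid k_d$ is \emph{not} equivalent to $k_d=p$. (What remains true is $k_d\leqslant q_K$, and that $p\nmid k_d$ forces $k_d<q_K$, so your ``empty sub-ball'' $W_s$ still exists; but the dichotomy you set up collapses.)

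\textbf{The gap.} You correctly identify that the whole difficulty is producing a \emph{definable} selector $f\colon S\to\mathbb{B}$ picking one root-child, in the absence of Skolem functions. But neither of your two routes is carried out. Route~2 (tracing the construction in \cite{cham-cub-leen} to show $k_d$ is a $p$-power) is speculation: nothing in that construction forces the branching multiplicities to be $p$-powers, and in fact the lemma only claims $p\mid k_d$, not that $k_d$ is a $p$-power. Route~1 has the same circularity it is meant to avoid: when $k_d<q_K$ there may be \emph{several} empty sub-balls of $R_s$, and singling out one particular $W_s$ is itself a Skolem-type choice. Even granting a definable $W_s$, you would still need a definable point (or at least a canonical coset representative) inside it to translate by, and that again requires Skolem.

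\textbf{What the paper does instead.} The paper's key idea bypasses the selection problem entirely by an \emph{averaging trick}. One considers, for each $s$, the definable set
\[
G_{k_d,s}=\Bigl\{\tfrac{1}{k_d}\sum_{i=1}^{k_d} x_i \ \Big|\ (x_1,\ldots,x_{k_d}) \text{ separated in }\Sigma_s\Bigr\},
\]
where ``separated'' means the $x_i$ lie in pairwise distinct root-subtrees. This set is definable by an existential formula --- no Skolem needed. The hypothesis $p\nmid k_d$ is used precisely here: it makes $1/k_d$ a unit, so every $g\in G_{k_d,s}$ has the form $a+\pi_{\gamma_d}(b+\pi\tilde g)$ with a \emph{fixed} $b\in\mathcal{O}_K$ and $\tilde g\in\mathcal{O}_K$. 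Translating $\Sigma_s$ by $G_{k_d,s}$ then sends the $i$-th subtree into $\pi_{\gamma_d}(b_i-b+\pi\mathcal{O}_K)$, and since the residues $b_i-b$ are pairwise distinct one can now definably single out one subtree via $\ord$ and $\ac_1$, and translate back. When $p\mid k_d$ this breaks down because $b$ may have negative valuation, as the example following the proof shows.

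In short: the missing idea is to average over \emph{all} subtrees simultaneously (a symmetric, hence definable, operation) rather than trying to pick an auxiliary object like $W_s$.
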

\begin{proof}
Suppose that $p \nmid k_d$. We will show that $C^{\Sigma}$ can be partitioned into regular clustered cells of order strictly lower than $k$, and hence is not minimal. By Lemma \ref{lemma:ac}, we may assume that  
%
the fibers $\Sigma_s$ consist of a finite number of disjoint balls $B_i$ of the same radius and with the same valuation and angular component (i.e. all elements have the same value under $\ac_1$). 
We will write $\gamma_d$ for the lowest branching height of the tree of $\Sigma_s$. At this height, the tree has only a single node, splitting in $k_d$ subtrees $T_1, \ldots, T_{k_d}$.  
 For each $s$, write $\mathcal{T}_s$ for the set
\[\mathcal{T}_s = \{ T_1, T_2,\ldots, T_{k_d}\}.\]
We need to show that for each $s$,  there exists a definable, non-empty strict subset $\mathcal{T}'_s$ of $\mathcal{T}_s$.
For a set $\Sigma_s \subseteq K$, we will say that points $(x_1, \ldots, x_r)$ are separated if they all belong to a different subtree of $\mathcal{T}_s$, i.e.
\[\text{Sep}_r(\Sigma_s) := \{(x_1, \ldots, x_r) \in (\Sigma_s)^r \mid  \ord(x_i - x_j) \leqslant \gamma_d \ \ \text{if}\ \ i \neq j \}. \]
For every $r \geqslant 1 $, let $G_{r,s} \subseteq K$ be the set defined by
\[G_{r,s}:=\left\{g \in  K \ \left| \ \exists (x_1, \ldots, x_r) \in \text{Sep}_r(\Sigma_s) : g = \frac1r\sum_i x_i \right\}\right.,\]
so that $G_{r,s}$ consists of all  possible averages of tuples of $r$ separated points in $\Sigma_s$. Note that $G_{r,s} = \emptyset$ if $r > |\mathcal{T}_s| = k_d$.  
\\\\
Now, we will consider the set $\widetilde{\Sigma}$, obtained by translating each $\Sigma_s$ over $G_{k_d,s}$, i.e. 
\[ \widetilde{\Sigma} := \{(s,y) \in S \times K \mid y \in \Sigma_s - G_{k_d,s}\}.\]
Write $\pi_{\gamma_d}$ for some arbitrary, fixed element of valuation $\gamma_d$. Then there exists $a\in K$, and, for each $i\in I:= \{1,\ldots,k_d\}$, an element $b_i\in \mathcal{O}_K$, such that the following holds. Every $x\in \Sigma_s$ whose $(C,\Sigma_s)$-equivalence class is contained in $T_i$, can be written as 
\[
x = a + \pi_{\gamma_d}(b_i+\pi \tilde{x}),\]
for some $\tilde{x}\in \mathcal{O}_K$. Further, there can be at most one $i_0 \in I$ for which $\ord\, b_{i_0}>0$. If $i,j \in I \backslash \{i_0\}$, then  $\ac_1(b_i)\neq\ac_1(b_j)$ when $i\neq j$. 
Using this representation, we get that elements $g\in G_{k_d,s}$ have the following form:
\[
g= \frac{1}{k_d}\sum_{i=1}^{k_d}(a+\pi_{\gamma_{d}}(b_i+\pi\tilde{x_i}))=a + \frac{\pi_{\gamma_d}}{k_d}\left(\sum_{i=1}^{k_d}b_i + \pi\sum_{i=1}^{k_d} \tilde{x_i}\right),
\]
for some $\tilde{x_i}\in \mathcal{O}_K$.
Hence, there is $b\in K$ such that every $g\in G_{k_d,s}$ is of the form \[g = a+\pi_{\gamma_d}(b+\pi\tilde{g}),\] for some $\tilde{g}\in K$. Furthermore, if $p \nmid \ k_d$, then $b$ and $\tilde{g}$ are elements of $\mathcal{O}_K$, 
which in turn implies that every $x-g\in \widetilde{\Sigma}_s$ is of the form
\[
x-g= \pi_{\gamma_d}(b_i-b +\pi(\tilde{x}-\tilde{g})),
\]
where $b_i$ is as before, depending only on the $(C, \Sigma_s)$-equivalence class of $x$. In particular, there can be at most one $j_0 \in I$ for which $\ord(b_{j_0}-b)>0$. If $i,j \in I \backslash \{j_0\}$, then  $\ac_1(b_i-b)\neq\ac_1(b_j-b)$ when $i\neq j$. 

Let $\widehat{\Sigma}$ be the set whose fibers $\widehat{\Sigma}_s$ are obtained by first restricting each $\widetilde{\Sigma}_s$ to the elements of minimal order, and subsequently choosing the elements having some fixed value for $\ac_1(\cdot)$.   
Now consider 
\[\widehat{\Sigma}^{-1} := \{(s,x) \in \Sigma \mid x \in \widehat{\Sigma}_s + G_{k_d,s}\}.\]
By construction, there will be some $i \in I$ 
 (depending on $s$) such that every $x\in (\widehat{\Sigma}^{-1})_s$ is of the form $x=a+\pi_{\gamma_d}(b_i+\pi\tilde{x})$. Hence, $(\widehat{\Sigma}^{-1})_s$ is contained in the ball $a+\pi_{\gamma_d}(b_i+\pi\mathcal{O}_K)$. If we now put $\Sigma':= \{ (s,x) \in \Sigma \mid \exists (s,x') \in \widehat{\Sigma}^{-1} : \ord(x-x') > \gamma_d\}$, then the tree of $\Sigma'$ is precisiely  $T_i$, which completes the proof.
\end{proof}

Note that, if $p\mid k_d$, the construction explained in the above proof may not yield anything useful, as it might happen that $\ord\, b<0$. We will illustrate this using an example.
 Remark however, that separating balls would be trivial in any structure that has definable Skolem functions. So, in order to provide a meaningful example, we will simulate a situation where Skolem functions do not exist. 

Take $K$ such that $k_K=\mathbb{F}_p$, let $S = K$, and assume there exists a definable set $\Sigma \subset S \times K$, such that 
\[\Sigma_s =  f(s) + \mathcal{O}_K,\]
where $f: S \to K$ is a non-definable function such that $\ord f(s) < 0$ for all $s \in S$. 
Then $\Sigma_s$ consists of balls $B_{i,s} = f(s) + i + \pi\mathcal{O}_K$, for $0 \leqslant i <p$. (In a clustered cell $C^{\Sigma}$, the sets $B_{i,s}$ could represent equivalence classes of centers.)

In this example, the subtrees $T_i$ are the balls $f(s) + i + \pi\mathcal{O}_K$, and we find that $G_{p,s} = f(s) + \mathcal{O}_K$, and hence $\widetilde{\Sigma_s} =  \mathcal{O}_K$, which is the union of the balls $j + \pi\mathcal{O}_K$, for $0 \leqslant j <  p$. 
The next step of the algorithm requires us to choose one of these balls (=subtrees) for $\widehat{\Sigma}_s$, say $\widehat{\Sigma}_s = j +\pi\mathcal{O}_K$. However, independent of the value of $j$, we always get that $\widehat{\Sigma}_s^{-1} = f(s) + \mathcal{O}_K$, which is the same as the original set $\Sigma_s$.

\

\bibliographystyle{plain}
\bibliography{../bibliography}

\end{document}